\documentclass[11pt]{amsart}

\usepackage[margin=1in]{geometry}
\usepackage{amsmath,amssymb,amsthm,mathtools}
\usepackage[colorlinks=true,linkcolor=blue,citecolor=blue,urlcolor=blue]{hyperref}

\newtheorem{theorem}{Theorem}
\newtheorem{corollary}{Corollary}
\newtheorem{lemma}{Lemma}
\theoremstyle{remark}
\newtheorem*{remark}{Remark}

\newcommand{\cH}{\mathcal H}
\newcommand{\cJ}{\mathcal J}
\newcommand{\cQ}{\mathcal Q}
\newcommand{\cQlin}{\mathcal Q_{\mathrm{lin}}}
\newcommand{\cT}{\mathcal T}
\newcommand{\dd}{\mathrm d}
\newcommand{\rank}{\operatorname{rank}}
\newcommand{\ta}{\tau}
\newcommand{\gam}{\gamma}
\newcommand{\Del}{\Delta}
\newcommand{\DelTwo}{\Delta_2}

\title{An Improved Lower Bound for the Erd\H{o}s--Lov\'asz Cover Number Problem}
\author{Varun Sivashankar}
\address{Department of Mathematics, Princeton University,
Princeton, New Jersey, USA}
\email{varunsiva@princeton.edu}
\date{}

\begin{document}

\begin{abstract}
Let $g(r)$ be the minimum number of edges in an $r$-uniform intersecting
hypergraph with cover number $r$.  Erd\H{o}s and Lov\'asz proved the lower
bound $g(r)\ge 8r/3-3$.  We first give a completely elementary proof that
$g(r)\ge 3r-4$.  We then build on the same approach and apply Kahn's
small-codegree hypergraph edge-colouring theorem to improve this to
$g(r)\ge ((41-\sqrt{19})/12-o(1))r\approx 3.053r$.  In
particular, this shows that $g(r)>3r$ for all sufficiently large $r$,
addressing a question of Erd\H{o}s.
\end{abstract}

\maketitle

\section{Introduction}

A hypergraph is \emph{intersecting} if every two of its edges meet.  A
\emph{cover}, or \emph{transversal}, of a hypergraph $\cH$ is a set of
vertices meeting every edge; its minimum size is denoted $\ta(\cH)$.

Following Erd\H{o}s and Lov\'asz~\cite{EL}, let $g(r)$ be the minimum number
of edges in an $r$-uniform intersecting hypergraph $\cH$ with $\ta(\cH)=r$.
The condition $\ta(\cH)=r$ is the largest possible cover number for an
intersecting $r$-uniform hypergraph, since any edge of $\cH$ is itself a
cover.  Erd\H{o}s and Lov\'asz proved
\[
  g(r)\ge \frac{8r}{3}-3.
\]
Erd\H{o}s later called this one of his three favorite combinatorial problems
and offered \$500 for a proof or disproof of $g(r)\le Cr$.  He also observed
that he and Lov\'asz could not even decide whether
$g(r)<3r$~\cite{ErdosFavorite}.  The problem is listed as Erd\H{o}s
Problem \#21 in Bloom's
collection~\cite{BloomProblem21}.  Kahn answered the
linearity question affirmatively, first up to a logarithmic
factor~\cite{KahnELI}, and then in full~\cite{KahnELII}.  For
small values, Tripathi proved $g(4)=9$ and gave a short proof that
$g(3)=6$~\cite{Tripathi}, while Bar\'at proved $g(5)=13$ and studied
related $2$-intersecting examples~\cite{Barat}.  On the lower-bound side, the
constant $8/3$ remained unchanged for decades; Bar\'at
observed in 2020 that this lower bound had seen no improvement for $45$
years~\cite{Barat}.  Frankl and Tokushige later described determining the
exact value of this function as ``hopelessly difficult''~\cite{FranklTokushigeSurvey}.

A recent line of work studies a vertex-constrained version of the same
problem.  Motivated by Bollob\'as' ``power of many colours'' problem, Alon,
Buci\'c, Christoph, and Krivelevich introduced the variant in which the
hypergraph is required to have a prescribed number of
vertices~\cite{PowerManyColours}; Buci\'c, Jain, and Sivashankar studied this
vertex-constrained problem across the full range~\cite{BJS}.  This
paper returns to the original, unconstrained Erd\H{o}s--Lov\'asz
problem and gives the following explicit lower bound.

\begin{theorem}\label{thm:main}
The following bounds hold.
\begin{enumerate}
\item[(i)] For every positive integer $r$,
\[
  g(r)\ge 3r-4.
\]
\item[(ii)] For every $\varepsilon>0$, there is $r_0$ such that, for every
$r\ge r_0$,
\[
  g(r)\ge \left(\frac{41-\sqrt{19}}{12}-\varepsilon\right)r.
\]
\end{enumerate}
\end{theorem}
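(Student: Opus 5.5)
We work throughout with a \emph{critical} structure. Let $\cH$ be $r$-uniform and intersecting with $\ta(\cH)=r$ and $|\cH|=m$ minimal. The basic tool is the classical observation behind the Erd\H{o}s--Lov\'asz bound: for every set $S$ of fewer than $r$ vertices there is an edge disjoint from $S$. We use it through partial covers. Each vertex $v$ has a degree $d(v)$. Since $\cH$ is intersecting, every edge $E$ meets every other edge, so $\sum_{v\in E}(d(v)-1)\ge m-1$ for each edge $E$.

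The plan for (i) is a greedy cover argument. Pick a vertex $v_1$ of maximum degree $\Del$, and let $\cH_1$ be the edges missing $v_1$. Then pick a vertex covering as many edges of $\cH_1$ as possible, and continue. After $r-1$ steps some edge must remain uncovered, because $\ta=r$. The gain at step $j$ is bounded below using intersection: any remaining edge $F$ meets all the other remaining edges, so some vertex of $F$ covers at least $(|\cH_j|-1)/r$ of them.

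This alone gives only a logarithmic-type bound, so we combine it with a second counting argument. Fix two edges $E,F$ and cover greedily inside $E\cup F$. A set of $r-1$ vertices from $E$ that also hits $F$ covers $E$-adjacent edges, and each edge avoiding the chosen set must meet both the omitted vertex of $E$ and $F$. The key elementary step is this. For each edge $E$ and each vertex $x\in E$, the set $E\setminus\{x\}$, of size $r-1$, is not a cover. So some edge $F_x$ meets $E$ only in $x$. These edges $F_x$ are distinct for distinct $x$, which already gives $r$ edges plus $E$. Next apply the same reasoning to $(E\setminus\{x,y\})\cup\{z\}$ with $z\in F_x\setminus E$ chosen to hit many $F$'s. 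This produces a further family of edges meeting $E$ in small sets, and a careful count of the edges produced for pairs gives a second batch of about $r$ edges, and then a third. Distinctness comes from the intersection patterns with $E$ and $F_x$. Bookkeeping the constant losses (edges counted twice when $|F\cap E|=2$, and the choice of $z$) should give $3r-4$.

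For (ii) we push the third layer further. The slack in (i) comes from vertices of the auxiliary system $\{F_x\setminus E\}$ that lie in many $F_x$. We form a hypergraph $\cJ$ on $V\setminus E$ whose edges are the traces $F_x\setminus E$. Intersecting forces every two traces to meet, so $\cJ$ is intersecting. Two cases arise.
\begin{itemize}
\item If $\cJ$ has small codegrees, Kahn's theorem on small-codegree hypergraphs colours (or nearly partitions) it into about $\Del(\cJ)(1+o(1))$ matchings. Each matching has size at most $1$ because $\cJ$ is intersecting. This forces the maximum degree to be about $r$, and a vertex of such high degree lets us cover all the $F_x$ at once. Adjusting the cover then yields many extra uncovered-edge witnesses.
\item Otherwise some pair has large codegree, and we exploit a vertex $z$ with degree $\gam r$. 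Choosing $E\setminus S\cup\{z\}$-type covers then produces about $(1-\gam)r$ new edges at a cost depending on $\gam$.
\end{itemize}
Writing the total edge count as a function of the parameters, roughly $3r+f(\gam,\delta)r$, and optimizing over the worst case should give a quadratic whose root produces $(41-\sqrt{19})/12$.

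The main obstacle is the second step: making the Kahn colouring apply. This requires verifying that the codegrees of the trace hypergraph are $o(\Del)$ after removing a small number of heavy pairs, and converting the colouring into genuinely new distinct edges of $\cH$ with the right quantitative tradeoff. I would first try to handle the heavy pairs by a threshold $\delta r$ and absorb them into the codegree case.
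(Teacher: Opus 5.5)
Your proposal does not prove either part. The decisive steps are asserted rather than derived, and the one step you make concrete does not deliver what you need.

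\textbf{Part (i).} After the first batch $\{F_x\}$, the pair step gives only the following. For $\{x,y\}\subseteq E$ and $z\in F_x\setminus E$, there is an edge $G$ with $\emptyset\ne G\cap E\subseteq\{x,y\}$ and $z\notin G$. This $G$ may simply be $F_y$, unless you take $z\in F_x\cap F_y$. Also, edges produced by overlapping pairs $\{x,y\}$ and $\{x,y'\}$ can coincide, because a single edge meeting $E$ only in $x$ may avoid both choices of $z$. Without further ideas, this guarantees one new edge per pair in a matching on $E$. That is about $r/2$ edges, for a total near $3r/2$. The claimed second full batch of $r$ edges, the third batch, and the ``bookkeeping'' reaching $3r-4$ are exactly the missing content, and you give no mechanism that would supply them.

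The paper's argument is global rather than local around one edge. Peeling vertices of degree at least $4$ gives $m\ge q+4(r-\tau(\cJ))$ with $\Del(\cJ)\le 3$. The real work is the residual bound $4\tau(\cJ)\le q+r+4$. It treats the edges of $\cJ$ as points and takes a maximum matching in the triple system of degree-three blocks. The cross-intersecting link-graph claim then shows that the pair-witnesses on the unmatched points cannot cover too many $U$--$C$ pairs.

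\textbf{Part (ii).} Here the Kahn step fails outright. Kahn's theorem requires rank at most a fixed $s$, with $\delta$ and $D_0$ depending on $s$. Your trace hypergraph $\{F_x\setminus E\}$ is $(r-1)$-uniform, so the theorem does not apply. The conclusion you want is also false in that regime. The lines of a projective plane of order $n$ form an intersecting hypergraph with all codegrees $1$ and maximum degree $n+1$, yet it needs $n^2+n+1$ colours. So small codegrees plus the intersecting property do not force the maximum degree to be close to the number of edges.

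The missing idea is the reduction that makes the rank bounded.
\begin{enumerate}
\item Peel at degree $5$, so that $\Del(\cJ)\le 4$.
\item Apply Kahn to the $4$-uniform hypergraph on $E(\cJ)$ whose blocks are the $B_v$ for degree-four vertices $v$. First prune it to codegree at most $1$ while keeping at least $\binom q2-5qr/4$ blocks.
\item Either a matching of size about $e/D$ or a star of $D$ blocks covering $1+3D$ points gives $\tau(\cJ)\le q/2-\sqrt{e/(2(1+\eta))}+1$.
\item Balance this against $m\ge 15r/4-q/4-5$, which comes from the degree-three lemma, and optimise over $\beta=q/r$.
\end{enumerate}
That optimisation is what produces $(41-\sqrt{19})/12$. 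In your outline, that constant is read off from the statement rather than derived.
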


Since $(41-\sqrt{19})/12>3$, part~(ii) implies $g(r)>3r$ for all
sufficiently large $r$, crossing the threshold that Erd\H{o}s explicitly
noted was unresolved.

A closely related problem of Meyer~\cite{Meyer} asks for the minimum number
$M(r)$ of edges in a maximal $r$-uniform intersecting hypergraph, where
maximal means that no further $r$-edge can be added while preserving the
intersecting property.  Every maximal $r$-uniform intersecting hypergraph has
cover number exactly $r$: otherwise a cover of size at most $r-1$ could be
extended, using fresh vertices if necessary, to a new $r$-edge meeting every
existing edge, contradicting maximality.  Hence $M(r)\ge g(r)$.  Dow, Drake,
F\"uredi, and Larson proved the lower bound $M(r)\ge 3r$ for
$r\ge 4$~\cite{DDFL}.  Consequently, Theorem~\ref{thm:main}(ii) also gives the
asymptotic improvement
$M(r)\ge ((41-\sqrt{19})/12-o(1))r$.

The proof of part~(i) peels off all vertices of degree at least $4$, leaving
an $r$-uniform intersecting
hypergraph $\cJ$ of maximum degree at most $3$, and then proves a sharp cover
bound for $\cJ$.  This bound is obtained by taking a maximum matching in the
auxiliary triple system formed by the degree-three vertices of $\cJ$ and
counting around the unmatched edges.

The matching argument follows a standard maximum-matching augmentation
philosophy.  After fixing a maximum matching, one studies the link graphs on
the uncovered vertices associated with a matched edge; maximality forces the
relevant link graphs to be pairwise cross-intersecting, since two disjoint
link edges of different types would replace one matched edge by two.  Closely
related link-graph arguments appear in work of Zhang, Zhao, and Lu on
degree-sum conditions for perfect matchings in $3$-uniform
hypergraphs~\cite{ZZL}, especially Lemma~7, where three pairwise
cross-intersecting graphs control local degree sums.  A similar structural
case analysis appears in Frankl and Wang's work on sunflower-free
families~\cite{FranklWangSunflowers}, Lemma~5.1, where two disjoint edges in
one graph force the other two graphs into a small star-like support.

Part~(ii) uses the same high-level approach, but peels at degree $5$ rather
than degree $4$ and then applies Kahn's small-codegree edge-colouring theorem
to the auxiliary $4$-uniform hypergraph formed by the degree-four vertices.
According to its maximum degree, this auxiliary hypergraph supplies either a
large matching or a large linear star; both yield efficient covers.

\section{The Peeling Reduction}

We may restrict attention to simple hypergraphs, since deleting repeated
edge-copies does not change uniformity, the intersecting property, or the cover
number, and can only decrease the number of edges.

Let $\cH$ be an $r$-uniform intersecting hypergraph with $\ta(\cH)=r$, and put
$m=|E(\cH)|$.  Starting from $\cH$, repeatedly choose a vertex of current
degree at least $4$, put it into a set $S$, and delete all current edges
containing it.  Stop when the remaining hypergraph has maximum degree at most
$3$.  Denote the remaining hypergraph by $\cJ$, and put
\[
  k:=|S|,\qquad q:=|E(\cJ)|.
\]
The hypergraph $\cJ$ is still $r$-uniform and intersecting unless it is empty.
If $\cJ$ is empty, then $q=0$ and $\ta(\cJ)=0$.

Each selected vertex deletes at least four previously undeleted edges, so
\begin{equation}\label{eq:peeling-edge-count}
  m-q\ge 4k.
\end{equation}
Moreover, $S$ together with any cover of $\cJ$ covers all of $\cH$.  Hence
\[
  r=\ta(\cH)\le k+\ta(\cJ),
\]
and therefore
\begin{equation}\label{eq:peeling-lower}
  m\ge q+4\bigl(r-\ta(\cJ)\bigr).
\end{equation}

\begin{lemma}\label{lem:q-upper}
Let $\cJ$ be an $r$-uniform intersecting hypergraph with $q$ edges and
$\Del(\cJ)\le 3$.  Then
\[
  q\le 2r+1.
\]
\end{lemma}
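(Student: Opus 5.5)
Fix an edge $E\in E(\cJ)$ and count incidences between $E$ and the other edges. Since $\cJ$ is intersecting, each of the other $q-1$ edges meets $E$ in at least one vertex. Since $\Del(\cJ)\le 3$, each vertex $v\in E$ lies in at most two edges other than $E$. Double counting the pairs $(v,F)$ with $v\in E\cap F$ and $F\ne E$ gives
\[
  q-1\;\le\;\sum_{F\ne E}|E\cap F|\;=\;\sum_{v\in E}\bigl(d(v)-1\bigr)\;\le\; 2r,
\]
so $q\le 2r+1$. The bound holds trivially if $\cJ$ is empty, and the same count works even when $\cJ$ is not assumed simple.

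There is no real obstacle. The only step to check is that every other edge is counted at least once, and this is exactly the intersecting property. The count also shows when equality $q=2r+1$ holds: every vertex of $E$ has degree exactly $3$, and every other edge meets $E$ in exactly one vertex. This equality structure is presumably what the later cover bound for $\cJ$ exploits, but the lemma itself needs only the one-line count above.
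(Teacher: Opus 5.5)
Your proof is correct and matches the paper's: fix an edge $E$, use the intersecting property to get $q-1\le\sum_{v\in E}\bigl(d(v)-1\bigr)$, and bound each summand by $2$ using $\Del(\cJ)\le 3$. Your extra remarks on the empty case and on multiplicities are accurate. The later argument only uses the inequality $q\le 2r+1$ itself, not the equality structure you describe.
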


\begin{proof}
If $q=0$, the result is immediate.  Otherwise fix an edge $E\in E(\cJ)$.
Since every other edge meets $E$,
\[
  q-1
  \le \sum_{v\in E}\bigl(\dd_{\cJ}(v)-1\bigr).
\]
Each summand is at most $2$, because $\Del(\cJ)\le 3$.  Thus $q-1\le 2r$.
\end{proof}

We can already obtain the Erd\H{o}s--Lov\'asz bound by finding a greedy cover
of $\cJ$ of size at most $q/3+r/6+2/3$.  Indeed, while the current residual
family has $h\ge r+2$ edges, a fixed edge sees total degree at least
$r+h-1\ge 2r+1$, so some vertex has degree $3$; choose it and delete its
three edges.  Once $h\le r+1$, pair the remaining edges.  This gives
\[
  \ta(\cJ)\le \frac q3+\frac r6+\frac23.
\]
Substituting this into~\eqref{eq:peeling-lower} and then using
Lemma~\ref{lem:q-upper} gives
\[
  m\ge \frac{10r}{3}-\frac q3-\frac83
    \ge \frac{8r}{3}-3.
\]
If we had the stronger residual estimate
\[
  4\ta(\cJ)\le q+r+4,
\]
then~\eqref{eq:peeling-lower} would immediately give
\[
  m\ge q+4r-(q+r+4)=3r-4.
\]
Thus Theorem~\ref{thm:main}(i) is reduced to this residual cover bound.  This
is exactly what we prove in Section~3.

\section{The Residual Cover Bound}

\begin{lemma}\label{lem:residual-cover}
Let $\cJ$ be an $r$-uniform intersecting hypergraph with $q$ edges and
$\Del(\cJ)\le 3$.  Then
\[
  4\ta(\cJ)\le q+r+4.
\]
\end{lemma}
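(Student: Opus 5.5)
We take a maximum matching $\mathcal M$ in the auxiliary triple system $\cT$ whose vertices are the edges of $\cJ$ and whose triples are the sets $\{e\in E(\cJ): v\in e\}$ over the degree-three vertices $v$ of $\cJ$. Thus each degree-three vertex $v$ gives the triple of the three edges through it, and $\mathcal M$ is a family of degree-three vertices $v_1,\dots,v_t$ whose edge-stars are pairwise disjoint. Choosing all $v_i$ covers $3t$ edges at cost $t$. The remaining $q-3t$ edges $U$ are then covered in pairs: any two edges of $\cJ$ meet, so one vertex covers both. This gives
\[
  \ta(\cJ)\le t+\left\lceil \tfrac{q-3t}{2}\right\rceil .
\]
This bound suffices only when $t$ is large, roughly $t\ge (q-2r-8)/2$. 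So the main task is to show that when $\mathcal M$ is small, the unmatched edges can be covered more cheaply, or else that $q$ is small relative to $r$.

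The counting step goes around an unmatched edge. Fix $E\in U$. Since $\cJ$ is intersecting,
\[
  q-1\le\sum_{v\in E}(\dd(v)-1)=|E\cap D_2|+2|E\cap D_3|,
\]
where $D_j$ denotes the vertices of degree $j$. So $E$ contains many degree-three vertices whenever $q$ is large compared with $r$. By maximality of $\mathcal M$, every degree-three vertex $w\in E$ has a star meeting some matched star $\mathcal M_i$. Hence the stars inside $U$ are cut off, and every triple of $\cT$ meets $\bigcup_i\mathcal M_i$.

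Next we use augmentation. For a matched vertex $v_i$ with star $\{A,B,C\}$, consider the link graphs on $U$: for each $X\in\{A,B,C\}$, let $L_X$ be the graph of pairs $\{e,f\}\subseteq U$ such that $\{X,e,f\}$ is a triple of $\cT$. If $L_A$ and $L_B$ contained disjoint edges, we could replace $v_i$ by two new stars, contradicting maximality. So the graphs $L_A,L_B,L_C$ are pairwise cross-intersecting. A structural lemma of the Zhang--Zhao--Lu or Frankl--Wang type then applies: either all three graphs are small, or only one is large (a star or triangle support), and then $|E(L_A)|+|E(L_B)|+|E(L_C)|$ is bounded by about $|U|$. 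Summing over $i$ bounds the number of degree-three incidences between $U$ and the matched stars. Double counting this against the lower bound on $\sum_{E\in U}|E\cap D_3|$ from the display above gives an inequality of the form $|U|(q-1-r)\lesssim c\,t\,|U|$, that is, $q\lesssim r+ct$.

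Finally, we combine the two estimates. The pairing bound gives $4\ta\le 4t+2(q-3t)+2=2q-2t+2$, and we need this to be at most $q+r+4$, i.e.\ $q\le r+2t+2$. So we need the structural count to give exactly $q\le r+2t+2$. One could also improve the covering step by covering triangles in $U$ via their common vertex, allowing a trade-off.

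The main obstacle is the link-graph case analysis: showing that each matched star can absorb at most two ``units'' of the excess $q-r$, with the right additive constant. I would first try the cleanest configuration, in which each unmatched degree-three vertex's star contains exactly one matched edge plus a pair in $U$, and bound the pairs per matched edge by the cross-intersecting structure. After that I would handle stars meeting two matched stars, which each charge at most one unit to every matched star they touch.
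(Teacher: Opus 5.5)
Your framework matches the paper's: a maximum matching $M_1,\dots,M_t$ in $\cT$, the cover of size $t+\lceil u/2\rceil$, and the target $q\le r+2t+2$, equivalently $r\ge u+t-2$. (Your earlier threshold $t\ge(q-2r-8)/2$ is a slip.) Augmentation to get cross-intersecting link graphs is also the paper's idea. The gap is in the counting that is supposed to produce the target, and it fails at three points.

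First, your double count cannot close. The sum $\sum_{E\in U}|E\cap D_3|$ counts vertices, and it has no upper bound of the form $c\,t\,u$. Many vertices of $E$ can share one block, and a vertex with block $\{E,z,z'\}$, $z,z'\in C$, lies in no link graph. For example, take $t=u=1$ with $r-2$ vertices of $E$ having block $\{E,A,B\}$, $A,B\in M_1$. You must count distinct pairs instead:
\begin{itemize}
\item Each vertex of $E\in U$ covers at most two pairs $\{E,F\}$, and at most one with $F\in U$, since no triple of $\cT$ lies in $U$.
\item Choose one witness $v_{xy}\in x\cap y$ for each pair $\{x,y\}\subseteq U$. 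These are distinct vertices, by $\Del(\cJ)\le 3$ and maximality.
\item If the $u-1$ witnesses in $E$ cover $p_E$ of the $3t$ pairs from $E$ to $C$, each other vertex of $E$ covers at most two of the remaining pairs. Hence $r\ge (u-1)+(3t-p_E)/2$.
\end{itemize}
Summed over $E\in U$, this is the paper's inequality $ur\ge u(u-1)+R/2$.

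Second, the structural bound you invoke is false as stated. Suppose that for some $x_0\in U$ there are blocks $\{X,x_0,y\}$ for every $X\in M_i$ and every $y\in U\setminus\{x_0\}$. Then $L_A=L_B=L_C$ is a single spanning star, so they are pairwise cross-intersecting. Yet they have $3(u-1)$ edges and support sum $3u$, which gains nothing; you need $\sum_E p_E\le t\,(u+O(1))$. The paper gets this by colouring each pair of $U$ only through its chosen witness, so $F_i$ is a simple graph with one colour per edge. It then bounds supports rather than edges: $\sum_{z\in M_i}|V_{i,z}|\le K=\max\{12,u+4\}$.

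Third, even with this, counting alone gives only $\alpha=r-u-t+2\ge 1-(K-u)t/(2u)$. That bound is negative once $t$ is large relative to $u$, so matched triples do not simply ``absorb two units each.'' The paper finishes with Lemma~\ref{lem:q-upper}: $q\le 2r+1$ gives $t\le u-3+2\alpha$. So $\alpha\le -1$ would force $t\le u-5$, and then the counting bound gives $\alpha>-1$, a contradiction with integrality.
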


\begin{proof}
The proof uses a maximum matching in the auxiliary triple system of
degree-three vertices.  The matched triples will be covered by their defining
vertices, and the unmatched points by pairing; the rest of the proof shows
that this cover is small enough.

For each vertex $v$ of $\cJ$, write
\[
  B_v:=\{A\in E(\cJ):v\in A\}.
\]
We view the edges of $\cJ$ as points.  Since $\Del(\cJ)\le 3$, every block
$B_v$ has size at most $3$; since $\cJ$ is intersecting, every pair of points
is contained in at least one block.

Let $\cT$ be the simple $3$-uniform hypergraph on vertex set $E(\cJ)$ whose
edges are the three-element blocks $B_v$.  Repeated blocks are retained only
once.  This does not affect the matching number, since two equal triples
cannot both appear in a matching.  Let
\[
  M=\{M_1,\ldots,M_t\}
\]
be a maximum matching in $\cT$, and put
\[
  C:=M_1\cup\cdots\cup M_t,\qquad U:=E(\cJ)\setminus C,\qquad u:=|U|.
\]
Then
\begin{equation}\label{eq:q-tu}
  q=3t+u.
\end{equation}
There is no edge of $\cT$ contained in $U$, since such an edge could be added
to $M$.

For each matching triple $M_i$, choose a vertex $w_i$ of $\cJ$ with
$B_{w_i}=M_i$.  These $t$ vertices cover all points in $C$.  Pair the points
of $U$ arbitrarily, leaving one singleton if $u$ is odd.  Each pair is covered
by one common vertex, and the singleton by any one of its vertices.  Thus
$\ta(\cJ)\le t+\lceil u/2\rceil$.

It is enough to prove
\begin{equation}\label{eq:alpha-target}
  r\ge u+t-2.
\end{equation}
Indeed, this would give
\[
  4\ta(\cJ)\le 4t+4\left\lceil\frac u2\right\rceil
  \le 4t+2u+2\le q+r+4.
\]
Put
\[
  \alpha:=r-u-t+2.
\]
We must show $\alpha\ge 0$.  If $u=0$, then $q=3t$, so
Lemma~\ref{lem:q-upper} gives $3t\le 2r+1$, and hence
$\alpha=r-t+2\ge (r+5)/3>0$.  We may therefore assume $u>0$.

For each pair $\{x,y\}\subseteq U$, choose one vertex
$v_{xy}\in x\cap y$, and let
\[
  \mathcal W:=\{v_{xy}:\{x,y\}\in \binom U2\}
\]
be the set of selected witnesses.  The block $B_{v_{xy}}$ is either $\{x,y\}$ or
$\{x,y,z\}$ for some $z\in C$: the third point, if present, cannot lie in
$U$.  The chosen vertices are distinct.  Indeed, suppose the same vertex
were chosen for two different pairs in $U$.  If the two pairs are disjoint,
then this vertex lies in four edges of $\cJ$, contradicting
$\Del(\cJ)\le 3$.  If the two pairs overlap, then their union consists of
three points of $U$, and the block of the chosen vertex is exactly this
three-point set.  This would give an edge of $\cT$ contained in $U$, which is
also impossible.

The selected witnesses therefore contribute exactly
\[
  2\binom u2=u(u-1)
\]
incidences with points of $U$.

Set
\[
  K:=\max\{12,u+4\}.
\]

We will use one local graph claim.  In the application, the graph vertices are
the points of $U$, and the three colours are the points of a fixed matching
triple $M_i$.

\smallskip
\noindent\emph{Claim.}
Let $F$ be a simple graph on $u$ vertices whose edges are coloured with at
most three colours.  Suppose no two vertex-disjoint edges have different
colours.  For each colour $j$, let $V_j$ be the set of vertices incident with
an edge of colour $j$, with $V_j=\varnothing$ for unused colours.  Then
\[
  |V_1|+|V_2|+|V_3|\le K.
\]

\smallskip
\noindent\emph{Proof of the claim.}
First suppose some colour class contains two disjoint edges $e$ and $f$, say
in colour $1$.  Let $W=e\cup f$.  Every edge of colour $2$ or $3$ must meet
both $e$ and $f$, so it has both endpoints in $W$.  If there are no such
edges, we are done.  If the family of edges in colours $2$ and $3$ has no
common vertex, then every edge of colour $1$ also has both endpoints in $W$,
and the support sum is at most $12$.  Otherwise these edges have a common
vertex $c$.  There are at most two possible such edges: they must contain $c$
and meet whichever of $e$ and $f$ does not contain $c$.  Since $F$ is simple,
the supports of colours $2$ and $3$ have total size at most $4$, while
$|V_1|\le u$.
Thus the support sum is at most $u+4$.

We may now assume that every colour class is pairwise intersecting.  Such a
graph is either a star or is contained in a triangle.  If all three supports
have size at most $4$, then the support sum is at most $12$.  Otherwise some
support has size at least $5$, so that colour class is a star with at least
four leaves.  Every edge of every other colour must contain the centre of this
star.  Thus each noncentral vertex belongs to at most one support and the
centre belongs to at most three, giving a support sum at most $u+2$.
\hfill$\square$

\smallskip

Fix $i\in\{1,\ldots,t\}$, and write $M_i=\{a_i,b_i,c_i\}$.  Define a
partially coloured simple graph $F_i$ on vertex set $U$ as follows: for a pair
$\{x,y\}\subseteq U$, put the edge $xy$ into $F_i$ with colour
$z\in M_i$ precisely when $B_{v_{xy}}=\{x,y,z\}$.

The graph $F_i$ has no two vertex-disjoint edges of different colours.  If
$xy$ had colour $z$, $x'y'$ had colour $z'$, the pairs $\{x,y\}$ and
$\{x',y'\}$ were disjoint, and $z\ne z'$, then the two triples
$\{x,y,z\}$ and $\{x',y',z'\}$ would be disjoint from one another and from all
$M_j$ with $j\ne i$.  Replacing $M_i$ by these two triples would enlarge the
matching $M$, a contradiction.  Hence the claim applies to $F_i$.

For $z\in M_i$, let $V_{i,z}$ be the support of colour $z$ in $F_i$.  By
construction,
\[
  V_{i,z}
  =
  \{x\in U:\{x,z\}\text{ is covered by a witness in }\mathcal W\}.
\]
If $S_i$ is the number of distinct pairs between $U$ and $M_i$ covered by
witnesses in $\mathcal W$, then
\[
  S_i=\sum_{z\in M_i}|V_{i,z}|\le K
\]
by the claim.  Since the triples $M_i$ partition $C$, the number $S$ of
distinct pairs between $U$ and $C$ covered by selected witnesses satisfies
\[
  S\le tK.
\]
There are $3tu$ pairs between $U$ and $C$, so the number $R$ not covered by
selected witnesses satisfies
\[
  R\ge 3tu-tK.
\]

Now let $w$ range over the vertices of $\cJ$ not in $\mathcal W$, and put
\[
  a(w):=|B_w\cap U|,\qquad c(w):=|B_w\cap C|.
\]
Every one of the $R$ remaining $U$--$C$ pairs is covered by at least one
vertex not in $\mathcal W$.  Since $a(w)+c(w)\le 3$, we have
$a(w)c(w)\le 2a(w)$ for every $w$.  Hence
\[
  \sum_w a(w)\ge R/2,
\]
where the sum is over vertices not in $\mathcal W$.

The total number of incidences between vertices of $\cJ$ and points of $U$ is
$ur$.  Therefore
\begin{equation}\label{eq:central}
  ur\ge u(u-1)+R/2\ge u(u-1)+\frac{3tu-tK}{2}.
\end{equation}

We now prove $\alpha\ge 0$.  Lemma~\ref{lem:q-upper}, together with
$q=3t+u$ and $r=u+t-2+\alpha$, gives
\begin{equation}\label{eq:t-alpha}
  t\le u-3+2\alpha.
\end{equation}
On the other hand, dividing~\eqref{eq:central} by $u$ and subtracting
$u+t-2$ gives
\begin{equation}\label{eq:alpha-incidence}
  \alpha\ge 1+\frac t2-\frac{Kt}{2u}
  =1-\frac{(K-u)t}{2u}.
\end{equation}
Suppose for contradiction that $\alpha<0$.  Since $\alpha$ is an integer,
\eqref{eq:t-alpha} gives $t\le u-5$.  Using this in
\eqref{eq:alpha-incidence}, and using $K\ge u$, gives
\[
  \alpha\ge 1-\frac{(K-u)(u-5)}{2u}.
\]
The right-hand side is greater than $-1$: if $K=12$, then $1\le u\le 8$ and
$(12-u)(u-5)<4u$, while if $K=u+4$, it equals $-1+10/u$.  Thus
$\alpha>-1$, contradicting the assumption that $\alpha$ is a negative
integer.  Hence $\alpha\ge0$, so~\eqref{eq:alpha-target} holds, and the
lemma follows.
\end{proof}

\begin{remark}
The additive constant in Lemma~\ref{lem:residual-cover} is at most one larger
than the best possible general constant.  Indeed, for odd $n$, let the edges
of $\cJ$ be
$E_1,\ldots,E_n$, and for each pair $\{i,j\}$ let $v_{ij}$ be a vertex lying
in exactly $E_i$ and $E_j$.  Then $\cJ$ is $(n-1)$-uniform, intersecting, and
has maximum degree $2$.  A cover of $\cJ$ is exactly an edge cover of $K_n$,
so $\ta(\cJ)=(n+1)/2$.  Thus, with $q=n$ and $r=n-1$, we have
$4\ta(\cJ)=q+r+3$.  Hence no bound of the form
$4\ta(\cJ)\le q+r+C$ can hold in general with $C<3$.  It is natural to ask
whether the $+4$ in Lemma~\ref{lem:residual-cover} can be replaced by $+3$.
\end{remark}

\section{Improving the Constant via Kahn's Theorem}

We now prove Theorem~\ref{thm:main}(ii).  To cross the $3r$ threshold raised
by Erd\H{o}s, we repeat the peeling argument with threshold $5$ and use
Kahn's edge-colouring theorem together with a star alternative in an auxiliary
$4$-uniform hypergraph.  In this section, auxiliary
hypergraphs are allowed to have repeated edge-copies.  For such a hypergraph
$G$, let $\rank(G)$ be the maximum size of an edge-copy, let $\Del(G)$ be the
maximum vertex degree, and let $\DelTwo(G)$ be the maximum codegree of a pair
of distinct vertices; degrees and codegrees are counted with multiplicity.  The
list chromatic index $\chi'_\ell(G)$ is defined in the usual way: every
edge-copy receives a colour from its list, and each colour class is required
to be a matching.

The near-regular chromatic-index theorem underlying this line of work is due
to Pippenger and Spencer~\cite{PippengerSpencer}.  We use the following
small-codegree list-colouring form, stated as Theorem~3.1 in
Kang--Kelly--K\"uhn--Methuku--Osthus~\cite{KKMO}, with attribution to
Kahn~\cite{KahnList}.  Their convention allows repeated edge-copies.

\begin{theorem}[Kahn's small-codegree list edge-colouring theorem]
\label{thm:kahn}
For every integer $s\ge 2$ and every $\gam>0$, there exist $\delta>0$ and
$D_0$ such that the following holds.  If $D\ge D_0$ and $G$ is a hypergraph
with
\[
  \rank(G)\le s,\qquad \Del(G)\le D,\qquad \DelTwo(G)\le \delta D,
\]
then
\[
  \chi'_\ell(G)\le (1+\gam)D.
\]
\end{theorem}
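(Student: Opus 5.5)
This statement is an external result, Theorem~3.1 of~\cite{KKMO}, attributed to Kahn~\cite{KahnList}. In the paper it is simply invoked. If I had to prove it, I would use the semi-random (R\"odl nibble) method in the list-colouring form introduced by Kahn. The reduction step is as follows. Give every edge-copy of $G$ a list of $L=(1+\gam)D$ colours. For a vertex $v$ and a colour $c$, let $d(v,c)$ be the number of uncoloured edges at $v$ whose current list contains $c$. The goal is to colour almost all edges in a sequence of rounds while maintaining the invariant that every remaining list has size comfortably larger than every relevant $d(v,c)$. Once $\max_{v,c} d(v,c)\le |L(e)|/(s+1)$ for all remaining edges $e$, a greedy or Lov\'asz Local Lemma finish completes the colouring. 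Each edge meets at most $s$ vertices, so it has at most $s\cdot\max d(v,c)$ conflicting neighbours per colour.

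In each round, every uncoloured edge independently becomes active with a small probability $\theta$. An active edge picks a uniformly random colour from its current list. It keeps that colour if no other active edge meeting it picked the same colour. Then, for each vertex $v$ and each colour $c$ used at $v$, colour $c$ is deleted from the lists of all uncoloured edges at $v$. A standard equalising coin flip makes the retention probability of each colour and each edge exactly a prescribed value. This keeps the expected evolution of $|L(e)|$ and of $d(v,c)$ governed by a deterministic recursion. I would analyse the recursion and check the key point: the ratio $\min|L(e)|/\max d(v,c)$ stays at least $1+\gam/2$ and in fact increases by a factor $1+\Omega(\theta)$ per round. The reason is that lists shrink by roughly $e^{-\theta\cdot(\text{degree}/\text{list})}$ per vertex of the edge, while degrees shrink by an additional factor coming from the edges that were themselves coloured.

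The hardest step will be concentration. I need to show that after each round every $|L(e)|$ and every $d(v,c)$ is within a $(1\pm\epsilon)$ factor of its expectation, uniformly over the $\mathrm{poly}(D)$ relevant events. Only then can the Local Lemma, using the dependency degree $\mathrm{poly}(D)$ coming from $\rank(G)\le s$, guarantee a good outcome of the round. This is where the hypothesis $\DelTwo(G)\le\delta D$ enters. It ensures that the edges at $v$, or the colours in $L(e)$, interact only weakly: two edges at $v$ share few common neighbours. As a result, the survival events are nearly independent, and changing one random choice affects each tracked quantity by $O(1)$, or by $O(\delta D)$ in the worst case. I would first try Talagrand's inequality with certificates of size $O(s)$. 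I expect the codegree bound to control the correlation terms in the second moment, so that the errors per round are $O(\sqrt{\delta}+D^{-c})$ times the main term.

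Finally, I would iterate for a number of rounds $T=T(\gam,s)$. I must ensure that the accumulated error $T(\sqrt\delta+D^{-c})$ stays below $\gam/10$. This is what determines $\delta$ and $D_0$: choose $\theta$ and $T$ in terms of $\gam$ and $s$, then take $\delta$ small enough and $D_0$ large enough. After $T$ rounds the ratio of list size to colour-degree exceeds $s+1$, and the greedy finish yields $\chi'_\ell(G)\le(1+\gam)D$.
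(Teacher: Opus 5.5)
\paragraph{The main gap.} The paper gives no proof of this theorem. It imports it as Theorem~3.1 of Kang--Kelly--K\"uhn--Methuku--Osthus, due to Kahn, so you are right that it is simply invoked.

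Your nibble outline, however, has a genuine gap exactly where the hypothesis $\DelTwo(G)\le\delta D$ is supposed to enter. You require that after each round \emph{every} colour-degree $d(v,c)$ be within a factor $1\pm\epsilon$ of its predicted value. Here $\epsilon$ must be well below $\theta$, since the ratio gains only a factor $1+O(\theta)$ per round. You get this from the Local Lemma with $\mathrm{poly}(D)$ dependencies, so each failure must have probability $\mathrm{poly}(D)^{-1}$.

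With codegrees of order $\delta D$ this is impossible. All edges at $v$ through a second vertex $w$ lose $c$ together when $c$ is used at $w$. So $d(v,c)$ behaves like a sum of about $1/\delta$ nearly independent blocks, each of size up to $\delta D$. For example, let the edges at $v$ split into $1/\delta$ groups of $\delta D$ edges, the $j$th group all containing a vertex $w_j$ of degree about $D$. Then with probability about $e^{-\Theta(\theta/\delta)}$ the colour $c$ is used at no $w_j$, and $d(v,c)$ exceeds its predicted value by a relative amount of order $\theta$.

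This probability does not decay with $D$. Since $v$ has $\Theta(D)$ colours, once $D\gg e^{C/\delta}$ some colour at $v$ is atypical with probability tending to $1$. The Local Lemma condition then fails by a factor growing with $D$. Your $O(\sqrt\delta)$ error is a standard deviation, not a tail bound. Talagrand's inequality with Lipschitz constant $\delta D$ is vacuous. Fixing $\delta$ first and then taking $D_0$ large makes matters worse, not better.

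\paragraph{A possible repair and smaller fixes.} What is missing is a mechanism that tolerates atypical pairs $(v,c)$. One option is a clean-up step. After each round, delete $c$ from all lists at $v$ whenever $d(v,c)$ exceeds its target, so the colour-degree bounds hold deterministically. Then prove instead that each list loses only a small fraction of its colours this way. That loss is a sum over the $\Theta(D)$ colours of $L(e)$ of rare, nearly independent events, each of probability $e^{-\Omega(\epsilon^2/(\theta\delta))}$. So it is small once $\delta$ is small, and it concentrates with tails exponentially small in $D$. For $\DelTwo(G)\le 1$, the only case the paper actually uses, the relevant Lipschitz constants are $O(s)$ rather than $\delta D$, and your per-pair scheme is the standard one.

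Two smaller repairs are also needed.
\begin{enumerate}
\item $d(v,c)$ drops to $0$ with probability $\Theta(\theta)$ when $c$ is used at $v$. So it is not two-sided concentrated, and it should be bounded conditionally on $c$ not being used at $v$.
\item The finishing condition $\max d(v,c)\le|L(e)|/(s+1)$ only bounds the conflicting neighbours per colour by $\frac{s}{s+1}|L(e)|$. That supports neither a greedy finish, which needs total degree below list size, nor the usual Local Lemma finish, which needs about $|L(e)|/8$. Running more rounds fixes this.
\end{enumerate}
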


We will use only the following rank-four consequence.

\begin{corollary}\label{cor:kahn-rank-four}
For every $\eta>0$, there is $D_0$ such that the following holds.  Let $G$ be
a $4$-uniform hypergraph with $\Del(G)=D\ge D_0$ and
$\DelTwo(G)\le 1$.  Then
\[
  \chi'(G)\le (1+\eta)D.
\]
Consequently,
\[
  \nu(G)\ge \frac{|E(G)|}{(1+\eta)D}.
\]
\end{corollary}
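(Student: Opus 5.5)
The plan is to apply Theorem~\ref{thm:kahn} with $s=4$ and $\gam=\eta$, and to use the codegree condition to absorb the small codegree bound. Theorem~\ref{thm:kahn} supplies $\delta>0$ and $D_1$ depending on $s=4$ and $\gam=\eta$. I will set $D_0:=\max\{D_1,\lceil 1/\delta\rceil\}$.

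Now let $G$ be $4$-uniform with $\Del(G)=D\ge D_0$ and $\DelTwo(G)\le 1$. Then $\rank(G)\le 4$ and $\Del(G)\le D$ hold trivially. Since $D\ge 1/\delta$, we also get $\DelTwo(G)\le 1\le \delta D$. So all hypotheses of Theorem~\ref{thm:kahn} are satisfied, and it gives $\chi'_\ell(G)\le(1+\eta)D$.

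Ordinary edge-colouring is the special case in which every edge-copy receives the same list $\{1,\dots,\lfloor(1+\eta)D\rfloor\}$. Hence $\chi'(G)\le\chi'_\ell(G)\le(1+\eta)D$.

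For the matching bound, I fix a proper edge-colouring of $G$ with at most $(1+\eta)D$ colours. Each colour class is a matching. The classes partition $E(G)$, counted with multiplicity if there are repeated copies. By pigeonhole, some class contains at least $|E(G)|/((1+\eta)D)$ edges, so $\nu(G)\ge |E(G)|/((1+\eta)D)$.

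There is no real obstacle here. The only points needing care are choosing $D_0$ large enough that the constant codegree bound $1$ falls below $\delta D$, and making sure list colouring specialises to ordinary colouring, which is immediate.
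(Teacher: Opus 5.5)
Your proposal is correct and matches the paper's proof: apply Theorem~\ref{thm:kahn} with $s=4$ and $\gam=\eta$, enlarge the threshold so that $1\le\delta D$, use $\chi'(G)\le\chi'_\ell(G)$, and take the largest colour class as the matching. Your explicit choice $D_0=\max\{D_1,\lceil 1/\delta\rceil\}$ and the common list $\{1,\dots,\lfloor(1+\eta)D\rfloor\}$ simply spell out the paper's ``increase $D_0'$ if necessary'' step and the comparison $\chi'\le\chi'_\ell$.
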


\begin{proof}
Apply Theorem~\ref{thm:kahn} with $s=4$ and $\gam=\eta$, and let $\delta$ and
$D_0'$ be the resulting threshold.  Increase $D_0'$, if necessary, so that
$1\le\delta D$ whenever $D\ge D_0'$.
The theorem gives
\[
  \chi'(G)\le \chi'_\ell(G)\le (1+\eta)D.
\]
The largest colour class in a proper edge-colouring is a matching of size at
least
\[
  \frac{|E(G)|}{\chi'(G)}
  \ge \frac{|E(G)|}{(1+\eta)D}.
\]
\end{proof}

\begin{proof}[Proof of Theorem~\ref{thm:main}(ii)]
Put
\[
  \beta_0:=\frac{4+\sqrt{19}}{3},
  \qquad
  c:=\frac{41-\sqrt{19}}{12}.
\]
Fix $\varepsilon>0$, and choose $\eta$ so that
\[
  0<\eta\le
  \min\left\{1,\left(\frac{\varepsilon}{2c}\right)^2\right\}.
\]
We prove that $m\ge (c-\varepsilon)r$ for all sufficiently large $r$.

Let $\cH$ be an $r$-uniform intersecting hypergraph with $\ta(\cH)=r$ and
$m=|E(\cH)|$.  Repeating the peeling argument with threshold $5$, peel
vertices of current degree at least $5$.  Let $S$ be the set of peeled
vertices, let $k:=|S|$, and let $\cJ$ be the remaining hypergraph.  Put
$q:=|E(\cJ)|$.  Then $\Del(\cJ)\le 4$, and the same covering argument gives
\begin{equation}\label{eq:five-peeling}
  m\ge q+5\bigl(r-\ta(\cJ)\bigr).
\end{equation}
Also, if $q>0$, then fixing one edge of $\cJ$ and using $\Del(\cJ)\le 4$
gives
\begin{equation}\label{eq:q-upper-four}
  q\le 3r+1.
\end{equation}
The same inequality is trivial when $q=0$.

We first prove a cover bound that reuses Lemma~\ref{lem:residual-cover}.  Let
$\cQ$ be the auxiliary $4$-uniform hypergraph on vertex set $E(\cJ)$ whose
edge-copies are the four-element blocks $B_v$ coming from vertices $v$ of
$\cJ$ of degree $4$.  Let $M$ be a maximal matching in $\cQ$, say
$|M|=a$, and let $U$ be the set of edges of $\cJ$ not covered by the matched
auxiliary edges.  The $a$ corresponding vertices of $\cJ$ cover the
$4a$ edges outside $U$.  Moreover, the subhypergraph of $\cJ$ induced by $U$
has maximum degree at most $3$, since a degree-four vertex inside $U$ would
give an auxiliary edge disjoint from $M$.  Lemma~\ref{lem:residual-cover}
therefore gives
\[
  \ta(\cJ)\le a+\frac{q-4a+r+4}{4}
  =\frac{q+r+4}{4}.
\]
Substituting this into~\eqref{eq:five-peeling} gives
\begin{equation}\label{eq:first-kahn-bound}
  m\ge \frac{15r}{4}-\frac q4-5.
\end{equation}
Since $c=15/4-\beta_0/4$, if $q\le\beta_0r$, then
~\eqref{eq:first-kahn-bound} gives $m\ge cr-5$, which is at least
$(c-\varepsilon)r$ for all sufficiently large $r$.  We may therefore assume
$q>\beta_0r$.  In particular, $q\to\infty$ and $q=\Theta(r)$,
by~\eqref{eq:q-upper-four}.

It remains to prove a second bound, useful when $q$ is large.  Let $x_i$ be
the number of vertices of degree exactly $i$ in $\cJ$, for $1\le i\le 4$.
Then
\begin{equation}\label{eq:degree-four-incidences}
  x_1+2x_2+3x_3+4x_4=qr.
\end{equation}
For distinct $A,B\in E(\cJ)$, let $\lambda_{AB}$ be the number of
degree-four vertices of $\cJ$ lying in $A\cap B$, and define
\[
  X:=\sum_{\{A,B\}\subseteq E(\cJ)}(\lambda_{AB}-1)_+.
\]
We claim that $\cQ$ has a $4$-uniform subhypergraph $\cQlin$ with
$\DelTwo(\cQlin)\le 1$ and
\begin{equation}\label{eq:rank-four-linear-count}
  |E(\cQlin)|\ge \binom q2-\frac{5qr}{4}.
\end{equation}
Indeed, let $z$ be the number of pairs $\{A,B\}$ with $\lambda_{AB}=0$.
For each such pair choose a vertex in $A\cap B$.  This chosen vertex has
degree $2$ or $3$: it cannot have degree $1$, since it lies in both $A$ and
$B$, and it cannot have degree $4$, by the definition of $z$.  A degree-two
vertex lies in one pair of edges of $\cJ$, while a degree-three vertex lies in
three pairs.  Thus
\[
  z\le x_2+3x_3.
\]
Every degree-four vertex of $\cJ$ lies in six pairs of edges, and hence
contributes $6$ to the sum of the pair-codegrees.  Since $\lambda_{AB}>0$
for exactly $\binom q2-z$ pairs,
\[
  X
  =6x_4-\left(\binom q2-z\right)
  \le 6x_4-\binom q2+x_2+3x_3.
\]
Hence
\[
  x_4-X\ge \binom q2-x_2-3x_3-5x_4
  \ge \binom q2-\frac{5qr}{4},
\]
where the last inequality follows from~\eqref{eq:degree-four-incidences}.
Starting with $\cQ$, whenever a pair $\{A,B\}$ of auxiliary vertices has
current codegree $d\ge2$, delete one entire auxiliary edge-copy $B_v$
containing $\{A,B\}$.  The chosen pair's excess drops from $d-1$ to $d-2$,
and no other pair-excess can increase.  Thus the total excess drops by at
least $1$ per deletion.  Initially it is $X$, so at most $X$ edge-copies are
deleted.  At termination, the remaining subhypergraph $\cQlin$ has
$\DelTwo(\cQlin)\le1$ and at least $x_4-X$ edge-copies.  This
proves~\eqref{eq:rank-four-linear-count}.

Write
\[
  e:=|E(\cQlin)|,\qquad D:=\Del(\cQlin).
\]
Since $q>\beta_0r$ and $\beta_0>5/2$,
~\eqref{eq:rank-four-linear-count} gives $e=\Omega(r^2)$.
Consequently, $D\ge 4e/q=\Omega(r)$.  For all sufficiently large $r$,
Corollary~\ref{cor:kahn-rank-four} therefore yields a matching in $\cQlin$
of size
\[
  a\ge \frac{e}{(1+\eta)D}.
\]
The corresponding $a$ vertices of $\cJ$ cover $4a$ distinct edges; pairing
the remaining edges gives
\[
  \ta(\cJ)
  \le a+\left\lceil\frac{q-4a}{2}\right\rceil
  \le \frac q2-\frac{e}{(1+\eta)D}+1.
\]

The above bound is strong when $D$ is small.  If $D$ is large, we can bound
the cover number in a different way.  Choose a vertex of $\cQlin$ of degree
$D$.  Since $\DelTwo(\cQlin)\le1$, the $D$ auxiliary edges through it have no
other vertices in common, and hence their union has size $1+3D$.  The
corresponding $D$ vertices of $\cJ$ cover those $1+3D$ edges.  Pairing the
remaining edges gives
\[
  \ta(\cJ)
  \le D+\left\lceil\frac{q-1-3D}{2}\right\rceil
  \le \frac q2-\frac D2.
\]
Combining the two covers and using the geometric mean,
\begin{equation}\label{eq:matching-star-cover}
  \ta(\cJ)
  \le \frac q2-
  \max\left\{\frac D2,\frac{e}{(1+\eta)D}\right\}+1
  \le \frac q2-\sqrt{\frac{e}{2(1+\eta)}}+1.
\end{equation}
Substituting~\eqref{eq:rank-four-linear-count} into
~\eqref{eq:matching-star-cover}, and then using~\eqref{eq:five-peeling},
gives
\[
  m\ge
  5r-\frac{3q}{2}
  +5\sqrt{\frac{1}{2(1+\eta)}
  \left(\binom q2-\frac{5qr}{4}\right)}-5.
\]

To finish, write $\beta=q/r$ and define
\[
  F(\beta):=
  5-\frac{3\beta}{2}
  +5\sqrt{\frac{2\beta^2-5\beta}{8}}.
\]
For all sufficiently large $r$, we have
$\beta/(4r)\le(2\beta^2-5\beta)/8$, since $\beta>\beta_0>5/2$.
Using $\sqrt{x-y}\ge\sqrt{x}-\sqrt y$ for $x\ge y\ge0$, first with
$x=(2\beta^2-5\beta)/8$ and $y=\beta/(4r)$, and then in
\[
  \frac1{\sqrt{1+\eta}}
  =\sqrt{1-\frac{\eta}{1+\eta}}\ge1-\sqrt\eta,
\]
we obtain, since $\beta\le3+1/r\le4$,
\[
\begin{aligned}
  &\sqrt{\frac{1}{2(1+\eta)}
  \left(\binom q2-\frac{5qr}{4}\right)}\\
  &\qquad\ge
  r(1-\sqrt\eta)\sqrt{\frac{2\beta^2-5\beta}{8}}
  -\sqrt r.
\end{aligned}
\]
Moreover, $5-3\beta/2\ge0$ for all sufficiently large $r$.  Hence
\[
  m\ge r(1-\sqrt\eta)F(\beta)-5\sqrt r-5.
\]

A direct differentiation shows that $F$ is increasing on $(5/2,\infty)$:
after moving the negative term in $F'(\beta)$ to the other side and
squaring, the claim is equivalent to
$256\beta^2-640\beta+625>0$, whose leading coefficient is positive and
whose discriminant is negative.  Also,
$3\beta_0^2-8\beta_0-1=0$ and $\beta_0>1$, so
\[
  F(\beta)\ge F(\beta_0)
  =\frac{15}{4}-\frac{\beta_0}{4}=c.
\]
Consequently,
\[
  m\ge (1-\sqrt\eta)cr-5\sqrt r-5.
\]
Our choice of $\eta$ gives $c\sqrt\eta\le\varepsilon/2$, and for all
sufficiently large $r$ we have $5\sqrt r+5\le\varepsilon r/2$.
Thus $m\ge(c-\varepsilon)r$, as required.
\end{proof}

\section*{Acknowledgements}

We thank Noga Alon and Matija Buci\'c for helpful comments.
The proof was discovered with the help of ChatGPT 5.5 Pro.
Theorem~1 was formalized in Lean with the help of Harmonic's Aristotle:
part~(i) was formalized in full, and part~(ii) was formalized conditional on
Kahn's theorem.

\end{document}